\documentclass[11pt,letterpaper]{article}
\usepackage{amsfonts, amsmath, amssymb, amscd, amsthm, graphicx}

\hoffset -1.35cm \voffset -1.9cm \textwidth=6in \textheight=8.5in
\tolerance=9000 \emergencystretch=5pt \vfuzz=2pt
\parskip=1.5mm

\makeatletter
\def\blfootnote{\xdef\@thefnmark{}\@footnotetext}
\makeatother

\newtheorem{thm}{Theorem}[section]
\newtheorem{cor}[thm]{Corollary}
\newtheorem{lem}[thm]{Lemma}

\newtheorem{prob}[thm]{Problem}

\theoremstyle{definition}

\theoremstyle{remark}

\newfont{\eufm}{eufm10}


\newcommand{\Ker }{{\rm Ker }}
\newcommand{\e }{\varepsilon }

\newcommand{\rg }{{\rm RG}}

\renewcommand{\phi }{\varphi}

\renewcommand{\ll }{\langle\hspace{-.7mm}\langle }
\newcommand{\rr }{\rangle\hspace{-.7mm}\rangle }
\renewcommand{\d }{{\rm def} }

\begin{document}

\title{Rank gradient and torsion groups}
\author{D. Osin \thanks{This work has been partially supported by the NSF grant DMS-0605093. }}
\date{}
\maketitle

\begin{abstract}
We construct first examples of infinite finitely generated
residually finite torsion groups with positive rank gradient. In
particular, these groups are non-amenable. Some applications to
problems about cost and $L^2$-Betti numbers are discussed.
\end{abstract}

\medskip

\blfootnote{\textbf{Keywords.} Torsion group, residually finite
group, rank gradient, amenability, $L^2$-betti number, cost of a
group.}

\smallskip

\blfootnote {\textbf{2000 Mathematical Subject Classification.}
20F69, 20F05, 20E26.}

\section{Introduction}

The main goal of this paper is to suggest a new construction of infinite finitely generated residually finite torsion groups. First examples of such groups were discovered by Golod \cite{Golod} and many other constructions have been found since then \cite{Al,Gri,GS,OO}. Our approach is similar to that from \cite{OO}. Although it is quite elementary, it allows us to construct first examples of infinite finitely generated residually finite torsion groups with positive rank gradient.

Recall that the {\it rank gradient} of a finitely generated group $G$ is defined by the formula
$$
\rg (G)=\inf\limits_{\scriptstyle \left[ G:H\right] <\infty }\frac{d(H)-1}{[G:H]} ,
$$
where $d(H)$ denotes the minimal number of generators of $H$ and the infimum is taken over all finite index subgroups $H\le G$. This notion was first introduced by Lackenby in \cite{L05} with motivation from $3$-dimensional geometry and has received lots of attention since then.

Note that $\rg (F_m)=m-1$ for a free group $F_m$ of rank $m$ by
the Nielsen-Schreier Formula.  In \cite{L05}, Lackenby proved that
$\rg (G)>0$ for every free product $G=A\ast B$, where at least one
of the multiplies is not isomorphic to $\mathbb Z_2$. On the other
hand, many groups have zero rank gradient. This is easy to verify
for $SL_n(\mathbb Z)$ if $n\ge 3$, ascending $HNN$-extensions, and
direct products of finitely generated infinite residually finite
groups \cite{AN07a,L05}. Further $\rg (G)=0$ whenever $G$ is
amenable. This was first proved by Lackenby \cite{L05} for
finitely presented groups and then by Abert and Nikolov in the
general case \cite{AN07a}.

Recall also that, given a (finite or infinite) sequence of primes $\Pi
=(p_i)$, a {\it $\Pi $-series} in a group $G$ is any chain of normal subgroups
\begin{equation}\label{ser}
G=G_0\rhd G_1\rhd \ldots ,
\end{equation}
such that  $G_{i-1}/G_i$ is either trivial or abelian of exponent
$p_i$. A group $G$ is {\it $\Pi $-graded} if $G$ admits a $\Pi
$-series $\{ G_i\} $ such that $\bigcap\limits_i G_i=\{ 1\} $. Note that all subgroups $G_i$ have finite index in $G$ whenever $G$ is finitely generated. In particular, every finitely generated $\Pi $-graded group is residually finite.

In \cite{OS}, examples of finitely generated infinite torsion $\Pi
$-graded groups were  constructed for any infinite sequence of
primes $\Pi $, but the approach used in \cite{OS} did
not allow to control rank gradient. In this paper we prove the following.

\begin{thm}\label{main}
For every infinite sequence of primes $\Pi $, there exist a finitely generated infinite torsion
$\Pi $-graded group with positive rank gradient.
\end{thm}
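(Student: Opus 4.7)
The strategy is to construct $G$ as the direct limit of a sequence of non-elementary hyperbolic torsion groups
\[
H_0 \twoheadrightarrow H_1 \twoheadrightarrow H_2 \twoheadrightarrow \cdots
\]
obtained by iterated small cancellation over hyperbolic groups, following the template of \cite{OO} and \cite{OS}. For the base I would take $H_0$ to be a free product of several copies of $\mathbb{Z}_{p_1}$, chosen so that Lackenby's free-product estimate gives $\rg(H_0)>0$; this $H_0$ is non-elementary hyperbolic and torsion of exponent $p_1$, and it sets the first terms of a prospective $\Pi$-series on $G$. Inductively, given $H_i$ together with a finite candidate chain $H_i=N_{i,0}\rhd N_{i,1}\rhd\cdots\rhd N_{i,k_i}$ whose consecutive quotients are abelian of the appropriate exponents from $\Pi$, pass to $H_{i+1}$ by adjoining a finite family of new relators: some of the form $x^{p_j}$ and $[x,y]$ for $x,y\in N_{i,j-1}$ to refine the chain (and thus extend the $\Pi$-series of $G$), together with further high-power relators that kill the next elements in a fixed enumeration of $H_i$ (to force torsion). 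All new relators are taken long enough to satisfy a strong small cancellation condition over $H_i$, so that $H_{i+1}$ remains non-elementary hyperbolic and torsion. Passing to $G=\varinjlim H_i$ yields a finitely generated infinite torsion group with a $\Pi$-series of trivial intersection.

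The novelty relative to \cite{OS} is control on the rank gradient. For this I would fix in advance a cofinal family of finite-index normal subgroups $K_n\lhd G$ and lower-bound $d(K_n)$ via the $\mathbb{Z}$-rank of the abelianization $K_n^{ab}$. The pull-backs $\tilde K_n\le H_0$ satisfy $\mathrm{rank}_{\mathbb{Z}}\tilde K_n^{ab}\ge c\,[H_0:\tilde K_n]$ by Kurosh's theorem and Lackenby's argument, since $H_0$ is a free product with $\rg(H_0)>0$. Each subsequently imposed relator $r_\ell$ can reduce $\mathrm{rank}_{\mathbb{Z}}\tilde K_n^{ab}$ only through its $H_0$-conjugates that land inside $\tilde K_n$. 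If $r_\ell$ is chosen long enough that its conjugates are essentially equidistributed among the cosets of $\tilde K_n$, and the lengths grow fast enough with $\ell$, then the total reduction contributed by all $r_\ell$ stays below $\tfrac{c}{2}[G:K_n]$. This gives $d(K_n)\ge \mathrm{rank}_{\mathbb{Z}}K_n^{ab}\ge\tfrac{c}{2}[G:K_n]$ for every $n$, and hence $\rg(G)\ge c/2>0$.

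The principal obstacle is the diagonalization needed to coordinate all these constraints simultaneously: at each stage one must kill the next element of $H_i$, add relators of the prescribed $p_j$-power shape to extend the $\Pi$-series, keep the accumulated relator set a strong small-cancellation family (so that $H_{i+1}$ is still hyperbolic), and preserve the abelianization-rank estimate for every $K_n$ already fixed. The last requirement forces the length of each new relator to depend not only on the previous relators, as in standard small cancellation, but also on the entire cofinal family of finite-index subgroups introduced so far; threading this diagonalization cleanly is the technical heart of the argument, and is precisely what goes beyond the construction in \cite{OS}.
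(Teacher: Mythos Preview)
Your proposal has a genuine gap in the rank-gradient control. You want to bound $d(K_n)$ from below by $\mathrm{rank}_{\mathbb Z}\,K_n^{ab}$, and to argue that the imposed relators only eat away at most half of the initial $\mathbb Z$-rank coming from the free part of $\tilde K_n^{ab}$. But the limit group $G$ you are constructing is torsion; hence every finite-index subgroup $K_n\le G$ is torsion, so $K_n^{ab}$ is a torsion abelian group and $\mathrm{rank}_{\mathbb Z}\,K_n^{ab}=0$. In other words, the infinitely many relators you impose must annihilate the \emph{entire} free part of $\tilde K_n^{ab}$, not merely half of it, and your asserted bound ``total reduction $\le\tfrac{c}{2}[G:K_n]$'' is therefore impossible. (This is exactly the observation $b_1(N)=0$ used in the discussion following Problems~\ref{c1}--\ref{c3}.) A secondary slip: $H_0=\mathbb Z_{p_1}\ast\cdots\ast\mathbb Z_{p_1}$ is not torsion --- a free product of at least two nontrivial finite groups contains non-abelian free subgroups.

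The paper sidesteps this obstruction by working with mod-$p$ invariants rather than $\mathbb Z$-ranks. It starts from any finitely presented $G$ with $\d(G)\ge 2$, tracks the \emph{deficiency} of $D_i=\delta^\Pi_{n_i}(G_i)$ along a chain of quotients (Lemmas~\ref{def} and~\ref{quot} say exactly how deficiency behaves under passage to finite index and under killing a single $m$th power), and at the end bounds $d(\delta^\Pi_{n_i}(Q))$ via the $\mathbb F_{p_{n_i+1}}$-rank of $D_i/[D_i,D_i]D_i^{\,p_{n_i+1}}$, which is at least $\d(D_i)$ and remains large even though $Q$ is torsion. Combined with Corollary~\ref{csp} or Lemma~\ref{misha} (every finite-index subgroup of such $Q$ contains some $\delta^\Pi_n(Q)$), this gives the lower bound on $\rg(Q)$. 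No small cancellation or hyperbolic geometry is used; the whole argument is an elementary deficiency bookkeeping. If you want to salvage your outline, you must replace $\mathrm{rank}_{\mathbb Z}$ by a mod-$p$ rank throughout and redo the ``equidistribution'' estimate in that setting --- at which point you are essentially rederiving the paper's deficiency inequality.
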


It is easy to show that if $\Pi =(p,p,\ldots )$, then every torsion $\Pi $-graded group is a $p$-group. Thus we obtain the following.

\begin{cor}\label{pgroup}
For every prime $p$, there exist a finitely generated infinite
residually finite $p$-group with positive rank gradient.
\end{cor}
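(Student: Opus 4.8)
The plan is to deduce the corollary directly from Theorem~\ref{main} applied to the constant sequence $\Pi = (p,p,\ldots)$. First I would invoke Theorem~\ref{main} with this $\Pi$ to obtain a finitely generated infinite torsion group $G$ which is $\Pi$-graded and has positive rank gradient; fix a $\Pi$-series $G = G_0 \rhd G_1 \rhd \ldots$ witnessing this, so that $\bigcap_i G_i = \{1\}$ and each quotient $G_{i-1}/G_i$ is either trivial or abelian of exponent $p$. Since $G$ is finitely generated and $\Pi$-graded, it is residually finite (as noted in the introduction), and each $G_i$ has finite index in $G$. It therefore remains only to verify that $G$ is a $p$-group, i.e. that every nontrivial element has order a power of $p$.

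For that I would argue as follows. Let $g \in G$ be nontrivial; since $G$ is torsion, $g$ has finite order $m \ge 2$, and I claim $m$ is a power of $p$. Suppose not, so that some prime $q \ne p$ divides $m$; replacing $g$ by $g^{m/q}$, I may assume $g$ has order exactly $q$. Because $\bigcap_i G_i = \{1\}$ and $g \ne 1$, there is a least index $n$ with $g \notin G_n$; then $g \in G_{n-1}$, and the image $\bar g$ of $g$ in $G_{n-1}/G_n$ is nontrivial. On one hand $\bar g$ has order dividing the order of $g$, namely $q$, so $\bar g$ has order $q$; on the other hand $G_{n-1}/G_n$ has exponent $p$, so $\bar g$ has order $1$ or $p$. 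Hence $q = p$, a contradiction. Thus every element of $G$ has $p$-power order, so $G$ is an infinite finitely generated residually finite $p$-group with positive rank gradient, which is what the corollary asserts.

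I do not anticipate any serious obstacle: the statement is essentially an immediate consequence of Theorem~\ref{main}, the only real content being the elementary observation — already flagged in the text — that a torsion $(p,p,\ldots)$-graded group is automatically a $p$-group. The one point requiring (minor) care is the selection of the index $n$ at which a given element first leaves the $\Pi$-series, which is exactly where the hypothesis $\bigcap_i G_i = \{1\}$ is used.
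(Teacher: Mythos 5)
Your proof is correct and follows essentially the same route as the paper: apply Theorem~\ref{main} to the constant sequence $(p,p,\ldots)$ and then observe the resulting torsion $\Pi$-graded group is a $p$-group. The only cosmetic difference is that you give a direct elementary argument for the $p$-group property, whereas the paper cites Lemma~\ref{ord}, which covers the same point.
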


Applying Theorem \ref{main} to a sequence $\Pi $ of pairwise distinct primes, one can also obtain examples of different nature (see Section 3). It is also worth noting that the finitely generated residually finite torsion $p$-groups constructed by Aleshin \cite{Al}, Grigorchuck
\cite{Gri} and Gupta-Sidki \cite{GS} have zero rank gradient since
they are amenable. On the other hand, we do not know whether
Golod-Shafarevich groups have positive rank gradient.

Theorem \ref{main} is partially motivated by the von Neumann-Day Problem in the class of residually finite groups. Recall that the original problem asks whether there exists a non-amenable group without non-abelian free subgroups. The affirmative answer was obtained by Olshanskii in \cite{Ols}. Other examples can be found in \cite{Ad,Gro,OS}. However the question remained open for residually finite groups until it was recently solved affirmatively by Ershov \cite{E}. The solution is based on an unexpected example of a Golod-Shafarevich group with property (T) whose construction involves quite complicated arguments. Combining Theorem \ref{main} and the theorem of Lackenby-Abert-Nikolov mentioned above allows us to recover Ershov's result.

\begin{cor}[Ershov]\label{Ersh}
There exists a non-amenable residually finite group without free subgroups.
\end{cor}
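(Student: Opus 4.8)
The plan is to obtain this as an immediate consequence of Theorem \ref{main} together with the theorem of Lackenby--Abert--Nikolov recalled in the introduction. First I would fix an arbitrary infinite sequence of primes $\Pi$ — for concreteness one may take $\Pi = (2,2,\ldots)$ — and let $G$ be a finitely generated infinite torsion $\Pi$-graded group with $\rg(G) > 0$, which exists by Theorem \ref{main}. The claim will be that $G$ itself witnesses the corollary.

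Next I would verify the three required properties of $G$. Residual finiteness is automatic: as observed in the introduction, every finitely generated $\Pi$-graded group is residually finite, since the subgroups $G_i$ in a defining $\Pi$-series have finite index in $G$ and trivial intersection. The absence of free subgroups follows from the fact that $G$ is a torsion group: any free group of positive rank contains a copy of $\mathbb Z$, hence an element of infinite order, so a torsion group cannot contain a free subgroup of positive rank — in particular, no non-abelian free subgroup. Finally, for non-amenability I would invoke the theorem of Lackenby (in the finitely presented case) and Abert--Nikolov (in general): $\rg(H) = 0$ for every finitely generated amenable group $H$. Since $\rg(G) > 0$, the group $G$ cannot be amenable.

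Combining these, $G$ is a non-amenable residually finite group without free subgroups, which is exactly the assertion. There is no real obstacle internal to this argument — all of the substantive work is concentrated in Theorem \ref{main}, which we are free to assume here. The only point requiring care is purely logical: one must use the amenability/rank-gradient implication in the correct direction, namely that amenability forces vanishing rank gradient (so that positivity of $\rg(G)$ yields non-amenability); the converse implication is false and is not needed.
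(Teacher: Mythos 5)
Your proof is correct and takes exactly the paper's route: take a finitely generated infinite torsion $\Pi$-graded group $G$ with $\rg(G)>0$ from Theorem \ref{main}, observe that finitely generated $\Pi$-graded groups are residually finite and torsion groups contain no nonabelian free subgroups, and conclude non-amenability from the Lackenby--Abert--Nikolov theorem that amenable groups have zero rank gradient. The paper states this more tersely (as a one-line application of the Lackenby--Abert--Nikolov theorem to the groups from Theorem \ref{main1} with $\e<1$), but the underlying argument is identical, and your added care about the direction of the amenability/rank-gradient implication is entirely sound.
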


The notion of rank gradient is closely related to some other group
invariants. Indeed for every finitely generated residually finite
group $G$, we have
\begin{equation}\label{ineq}
\rg (G)\ge {\rm cost} (G) -1 \ge \beta ^{(2)}  _1(G)-\frac{1}{|G|},
\end{equation}
where ${\rm cost} (G)$ denotes  the cost of $G$, $\beta ^{(2)}
_1(G)$ is the first $L^2$-Betti number of $G$ (we refer to
\cite{G00,Luck} for the definitions), and $1/|G|=0$ if $G$ is
infinite. The first inequality in (\ref{ineq}) is due to Abert and
Nikolov \cite{AN07b} and the second one is due to Gaboriau
\cite{G02}. In fact, in all cases where the the exact values of
these quantities are calculated they are equal, and it is still
unknown whether the inequalities in (\ref{ineq}) can be strict. The
following two questions were asked by Gaboriau \cite{G00,G02}.

\begin{prob}[Fixed Price]\label{c1}
Does every finitely generated group have fixed price?
\end{prob}

Recall that a countable group $G$ has fixed price,  if all
essentially free measure-preserving Borel actions of $G$ have the
same cost, which equals ${\rm cost} (G)$ in this case. Abert and
Nikolov \cite{AN07b} proved that for every finitely generated
residually finite group $G$, $\rg (G)$ equals the cost of the
natural action of $G$ on its profinite completion endowed with the
Haar measure minus $1$. Thus the Fixed Price Conjecture would imply
the equality $\rg (G)= {\rm cost} (G) -1$.

\begin{prob}[First $L^2$-Betti Number vs. Cost]\label{c2}
Is ${\rm cost} (G) -1 = \beta ^{ (2)}  _1(G)$ for every finitely generated infinite group $G$?
\end{prob}

The first $L^2$-Betti number of a finitely presented residually
finite group  $G$ can be defined in purely group theoretic terms due
to the following Approximation Theorem of Luck \cite{Luck}. Let $\{
N_i\} $ be a nested sequence of finite index normal subgroups of $G$
with trivial intersection. Then we have
\begin{equation}\label{b}
\beta _1^{(2)} (G)= \lim\limits_{i\to \infty} \frac{b_1(N_i)}{[G:N_i]},
\end{equation}
where $b_1(N_i)$ is the ordinary first  Betti number of $N_i$.
However the answer to the following question is still unknown.

\begin{prob}[Approximation for Finitely Generated Groups] \label{c3}
Does the approximation (\ref{b}) hold for every finitely generated residually finite group $G$?
\end{prob}

It is easy to see that Theorem \ref{main} implies the following.

\begin{cor}
At least one of the Problems \ref{c1}-\ref{c3} has negative solution.
\end{cor}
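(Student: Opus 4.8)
\emph{Proof idea.} The plan is to derive a contradiction from the assumption that Problems \ref{c1}, \ref{c2} and \ref{c3} all have positive solutions, using the group furnished by Theorem \ref{main}. Fix any infinite sequence of primes $\Pi$ and let $G$ be a finitely generated infinite torsion $\Pi$-graded group with $\rg(G)>0$, as provided by Theorem \ref{main}. Since $G$ is finitely generated and residually finite, it admits a nested sequence $N_1\ge N_2\ge\ldots$ of finite index normal subgroups with $\bigcap_i N_i=\{1\}$ (for instance the terms of a defining $\Pi$-series). The key point is that each $N_i$ is finitely generated (Schreier) and torsion (being a subgroup of a torsion group), hence its abelianization is a finitely generated torsion abelian group and therefore finite; thus $b_1(N_i)=0$ for all $i$, and more generally $b_1(H)=0$ for every finite index subgroup $H\le G$.

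Now assume, for contradiction, that all three problems have positive solutions. Applying the approximation of Problem \ref{c3} to the sequence $\{N_i\}$ gives
\[
\beta^{(2)}_1(G)=\lim_{i\to\infty}\frac{b_1(N_i)}{[G:N_i]}=0 .
\]
A positive answer to Problem \ref{c2} then yields $\c(G)-1=\beta^{(2)}_1(G)=0$, i.e.\ $\c(G)=1$. On the other hand, as noted just after the statement of Problem \ref{c1}, a positive answer to the Fixed Price Problem implies $\rg(G)=\c(G)-1$: the translation action of $G$ on its profinite completion equipped with the Haar measure is an essentially free, measure-preserving Borel action, so by fixed price its cost equals $\c(G)$, while by the theorem of Abert and Nikolov this cost equals $\rg(G)+1$. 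Combining these, $\rg(G)=\c(G)-1=0$, contradicting $\rg(G)>0$. Hence at least one of Problems \ref{c1}--\ref{c3} has a negative solution.

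The whole argument is just a chase through the inequalities (\ref{ineq}) together with the Abert--Nikolov description of $\rg(G)$ as a cost; the only substantive input is Theorem \ref{main}, which supplies a finitely generated group that is simultaneously ``large'' (positive rank gradient) and ``small'' (torsion, so that the ordinary first Betti numbers of all its finite index subgroups vanish). The one step that deserves a little care is the passage from a positive answer to Problem \ref{c1} to the equality $\rg(G)=\c(G)-1$: one must verify that the profinite action is essentially free and measure preserving and then invoke the Abert--Nikolov computation of $\rg(G)$ as the cost of that action. Everything else is routine.
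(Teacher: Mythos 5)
Your proof is correct and follows essentially the same route as the paper: both reduce to the observation that a finitely generated residually finite torsion group with positive rank gradient forces either a failure of the approximation formula (\ref{b}) or strictness somewhere in the chain (\ref{ineq}). The paper phrases this compactly (``either (\ref{b}) is violated or at least one of the inequalities in (\ref{ineq}) is strict''), while you unwind the same implications as an explicit proof by contradiction, spelling out the Abert--Nikolov identification of $\rg(G)$ with the cost of the profinite action; the mathematical content is identical.
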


Indeed let $G$ be a finitely generated residually finite torsion group with positive rank gradient. We obviously have $b_1(N)=0$ for every subgroup $N\le G$ and hence the right side of (\ref{b}) equals zero. Thus either (\ref{b}) is violated or at least one of the inequalities in (\ref{ineq}) is strict.

{\bf Acknowledgments. } I am grateful to Miklos Abert and Alexander
Olshanskii for useful discussions. I am also grateful to Misha
Ershov who suggested Lemma \ref{misha} and noticed that it can be used to show that finitely generated infinite residually finite $p$-groups constructed by our method have positive rank gradient. This was overlooked by the author in the first version of the paper.

\section{Preliminaries}
In this section we collect some auxiliary definitions and results used in the proof of our main theorem.
Given a group $G$ and elements $x,y\in G$, we  write $x^y$ for $y^{-1}xy$. We
denote by $\ll S\rr ^G$ (or just $\ll S\rr $ if no confusion is
possible) the normal closure of a subset $S$ in $G$, i.e., the
smallest normal subgroup of $G$ containing $S$. By $d(G)$ we denote the minimal number of generators of a finitely generated group $G$. Finally if $G$ is finitely presented, $\d (G)$ denotes the deficiency of $G$.

Given a finitely presented group $G$,  its {\it deficiency} is defined as the maximum of $d-r$ over all finite
presentations
\begin{equation}\label{pres}
G=\langle x_1, \ldots , x_d\mid R_1, \ldots , R_r\rangle .
\end{equation}

We start with an obvious observation. It is an immediate corollary of the Neilsen-Schreier
formula.

\begin{lem}\label{NS}
Let $G$ be a finitely generated group, $K\le H$ finite index
subgroups of $G$. Then $(d(K)-1)/[G:K]\le (d(H)-1)/[G:H] $.
\end{lem}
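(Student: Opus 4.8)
The statement is that for finite-index subgroups $K \le H \le G$ with $G$ finitely generated, we have $(d(K)-1)/[G:K] \le (d(H)-1)/[G:H]$. Since $[G:K] = [G:H]\cdot[H:K]$, after clearing denominators this is equivalent to the inequality $d(K) - 1 \le [H:K]\,(d(H)-1)$, so the group $G$ plays no real role: it suffices to prove that for any finitely generated group $H$ and any finite-index subgroup $K \le H$ of index $n = [H:K]$, one has $d(K) - 1 \le n\,(d(H)-1)$. This is the natural ``subgroup'' form of the Nielsen–Schreier rank formula, and it is what I would isolate and prove first.

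The plan is to deduce this from the free-group case by a covering/lifting argument. Write $H = F/N$ where $F = F_d$ is free of rank $d = d(H)$ and $N \trianglelefteq F$. Let $\pi\colon F \to H$ be the quotient map and set $E = \pi^{-1}(K)$; then $E$ is a finite-index subgroup of $F$ with $[F:E] = [H:K] = n$, and $N \le E$ with $E/N \cong K$. By the Nielsen–Schreier formula (which I may cite, as the lemma is stated as a corollary of it), $E$ is free of rank $d(E) = n(d-1)+1 = n(d(H)-1)+1$. Now $K = E/N$ is a quotient of $E$, and the minimal number of generators cannot increase under passing to a quotient, so
\[
d(K) \le d(E) = n\,(d(H)-1) + 1,
\]
which rearranges to exactly $d(K) - 1 \le n\,(d(H)-1) = [H:K]\,(d(H)-1)$. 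Dividing by $[G:K] = [G:H]\,[H:K]$ gives the claimed inequality.

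There is essentially no hard step here — the only point requiring a word of care is the choice of the free cover: one must take $F$ of rank exactly $d(H)$ so that the Schreier index formula produces the sharp bound $n(d(H)-1)+1$ rather than something weaker. One should also note the trivial monotonicity fact that if $A$ surjects onto $B$ then $d(B)\le d(A)$ (images of a generating set generate), which is what lets us pass from $E$ down to $K = E/N$. Everything else is bookkeeping with indices. I expect the author's proof to be a one- or two-line version of exactly this argument, possibly stated even more directly by observing that any $d$-generated group is a quotient of $F_d$ and that finite-index subgroups of $d$-generated groups are $(n(d-1)+1)$-generated.
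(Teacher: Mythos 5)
Your argument is correct and is exactly the "immediate corollary of the Nielsen--Schreier formula" that the paper has in mind (the paper in fact gives no proof, just that remark): lift $K$ to a finite-index subgroup $E$ of a free group $F_{d(H)}$ covering $H$, apply Nielsen--Schreier to get $d(E)=[H:K](d(H)-1)+1$, and use that $d$ cannot increase under quotients. The index bookkeeping to reduce to the case $G=H$ is also as the paper intends.
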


The next lemma is also well-known. We outline the proof for the
sake of completeness.

\begin{lem}\label{def}
Let $G$ be a finitely presented group,  $H$ a finite index
subgroup of $G$. Then $\d (H) -1\ge (\d (G) -1)[G:H]$.
\end{lem}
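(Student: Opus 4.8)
The plan is to pass from a finite presentation of $G$ realizing the deficiency to a presentation of $H$ via a standard Reidemeister--Schreier rewriting, and then bound the number of generators and relators of the resulting presentation. First I would fix a finite presentation $G=\langle x_1,\dots,x_d\mid R_1,\dots,R_r\rangle$ with $d-r=\d(G)$, and set $n=[G:H]$. Choose a Schreier transversal $T$ for $H$ in $G$ with $|T|=n$. The Reidemeister--Schreier procedure then yields a presentation of $H$ on $dn$ generators (the nontrivial Schreier generators, of which there are at most $dn$) and $rn$ relators (one rewritten relator $R_j$ for each pair consisting of a relator and a transversal element). Thus $H$ admits a presentation with $d'\le dn$ generators and $r'\le rn$ relators, but this only gives $d'-r'\le (d-r)n$ with the wrong inequality direction, so a correction is needed.

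The key point is the standard ``$+1$'' correction coming from the fact that $H$ need not be presented on a spanning set of the free group of rank $dn$; equivalently, one uses the Euler characteristic bookkeeping. Concretely, the presentation complex of the Reidemeister--Schreier presentation is an $n$-fold cover of the presentation complex of $G$, so its Euler characteristic is multiplied by $n$: writing $\chi(G) = 1 - d + r$ for the presentation of $G$ realizing the deficiency, the covering presentation of $H$ has Euler characteristic $n(1-d+r) = 1 - d' + r'$ with $d'=dn$, $r'=rn$ (after restoring the trivial Schreier generators and the relators setting them to $1$, which do not change the complex up to homotopy, or simply by a direct count). Hence $\d(H) \ge d' - r' = dn - rn + (1-n) \cdot 0$... more carefully: $1 - d' + r' = n(1-d+r)$ gives $d' - r' = 1 - n(1-d+r) = 1 - n + n(d-r)$, so
\[
\d(H) \ge d' - r' = n(\d(G)-1) + 1,
\]
that is, $\d(H) - 1 \ge (\d(G)-1)[G:H]$, as claimed.

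The main obstacle, such as it is, is getting the counting bookkeeping exactly right: one must be careful that the Reidemeister--Schreier generating set has size exactly $dn$ minus the number of Schreier generators that are trivially $1$ (the ones of the form $tx_i$ with $tx_i\in T$), and that each such trivial generator is accompanied by a relator, so that deleting them in pairs leaves the difference $d'-r'$ unchanged; alternatively one avoids this entirely by the topological argument via Euler characteristics of the finite covers of the presentation $2$-complex, which I expect to be the cleanest route. Either way the content is purely the multiplicativity of Euler characteristic under finite covers, and there is no serious difficulty.
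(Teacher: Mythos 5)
Your proposal is correct and takes essentially the same approach as the paper: both pass to the full preimage in the free group, count generators via Nielsen--Schreier (giving $(d-1)n+1$, where $n=[G:H]$) and relators via Reidemeister--Schreier / normal-closure rewriting (giving $rn$); your Euler-characteristic phrasing of the covering 2-complex is just the topological packaging of this same bookkeeping. Note a small slip in the middle where you assert $d'=dn$, $r'=rn$ (which would give $1-d'+r'=1-dn+rn\neq n(1-d+r)$), but you immediately self-correct to $d'-r'=1-n+n(d-r)$, which is the right count and yields the stated inequality.
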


\begin{proof}
Let $G=F/R$,  where $F=\langle x_1, \ldots , x_d\rangle $ is free
of rank $d$, $R=\ll R_1, \ldots , R_r\rr ^F$, and $\d (G) = d-r$.
By the Nilsen-Schreier Formula, the full preimage $K$ of $H$ in
$F$ has rank $(d-1)j+1$, where $j=[F:K]=[G:H]$. It is
straightforward to check that $R=\ll R_i^t, \, i=1, \ldots, r,\,
t\in T\rr ^K$, where $T$ is a set of left coset representatives of
$K$ in $F$. Thus $H=K/R$ has a presentation with $(d-1)j+1$
generators and $r|T|=rj$ relations, which implies $$\d (H)-1\ge
(d-1)j-rj=(d-r-1)j=(\d (G)-1)[G:H].$$
\end{proof}

\begin{lem}\label{quot}
Let $G$ be a finitely presented group, $N$ a finite index normal subgroup of $G$, $g$ an element of $G$, $m$ the order of $gN$ in $G/N$. Let $M$ denote the natural image of $N$ in the quotient group $Q=G/\ll g^m\rr $. Then $\d (M)\ge \d (N) - [G:N]/m $.
\end{lem}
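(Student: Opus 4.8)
The plan is to build an efficient presentation of $M$ directly out of an efficient presentation of $N$. Since $G$ is finitely presented and $N$ has finite index, $N$ is finitely presented; I would fix a presentation $N=\langle x_1,\ldots ,x_d\mid R_1,\ldots ,R_r\rangle$ realizing the deficiency, so $d-r=\d (N)$. Because $m$ is the order of $gN$ in $G/N$ we have $g^m\in N$, and since $N$ is normal in $G$ the normal closure $\ll g^m\rr ^G$ is contained in $N$; hence $M=N/\ll g^m\rr ^G$. Thus everything reduces to normally generating $\ll g^m\rr ^G$ inside $N$ by as few elements as possible and adjoining the corresponding relators to the presentation above.

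The crux is that $g$ centralizes $g^m$, which collapses many of the $[G:N]$ conjugates of $g^m$ into each other. Concretely, put $H=\langle N,g\rangle$; then $H/N$ is the cyclic subgroup of $G/N$ generated by $gN$, so $[H:N]=m$ and $[G:H]=[G:N]/m$. Choosing a transversal $S$ of $H$ in $G$, the set $\{ g^js : 0\le j<m,\ s\in S\}$ is a transversal of $N$ in $G$ (using $Ns=sN$ and $H=\bigsqcup_j g^jN$, so that $G=\bigsqcup_{j,s} g^jsN$). Since $\ll g^m\rr ^G$ is normal in $G$ and contained in $N$, it is the normal closure in $N$ of the conjugates $\{ (g^m)^{g^js} : 0\le j<m,\ s\in S\}$; but $(g^m)^{g^js}=\big((g^m)^{g^j}\big)^s=(g^m)^s$, so in fact
$$
\ll g^m\rr ^G=\ll\, (g^m)^s \ :\ s\in S\,\rr ^N ,
$$
the normal closure in $N$ of at most $|S|=[G:N]/m$ elements.

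To finish, I would write each $(g^m)^s$ as a word $w_s$ in $x_1,\ldots ,x_d$. Then
$$
M=\langle x_1,\ldots ,x_d\mid R_1,\ldots ,R_r,\ w_s\ (s\in S)\rangle
$$
is a presentation of $M$ with $d$ generators and $r+[G:N]/m$ relators, whence $\d (M)\ge d-r-[G:N]/m=\d (N)-[G:N]/m$.

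The one step that carries the whole argument is the counting in the second paragraph: that $\ll g^m\rr ^G$ needs only $[G:N]/m$ rather than $[G:N]$ normal generators inside $N$, which is precisely what produces the factor $1/m$ in the bound (without the observation that conjugation by powers of $g$ fixes $g^m$ one would only get $\d (N)-[G:N]$). The remaining ingredients — finite presentability of finite-index subgroups, and the fact that adjoining words for a normal generating set of a normal subgroup turns a presentation of a group into a presentation of the quotient — are routine.
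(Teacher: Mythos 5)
Your proof is correct and follows essentially the same route as the paper's: both reduce to showing that $\ll g^m\rr^G$ is normally generated inside $N$ by the $[G{:}\langle g\rangle N]=[G{:}N]/m$ conjugates $(g^m)^s$ with $s$ ranging over coset representatives of $\langle g\rangle N$, using that conjugation by powers of $g$ fixes $g^m$, and then adjoin these as relators to a deficiency-realizing presentation of $N$. You have merely spelled out the transversal bookkeeping that the paper delegates to a cited lemma.
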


\begin{proof}
Let $T$ denote the set of representatives of right cosets of $\langle g\rangle N$
in $G$. It is an easy exercise to prove that $\ll g\rr ^G= \ll Z\rr ^N$, where $Z=\{ g^{t} \, |\, t\in T\}$ (see, e.g., \cite{OS}[Lemma 2.3]).
Thus given a presentation of $N$, we can get a presentation of $M$ by adding $|T|=[G:\langle g\rangle N]=[G/N :\langle g\rangle N/N ]=[G:N]/m $ relations. This implies the statement of the lemma.
\end{proof}

To each group $G$ and each sequence of primes $\Pi =(p_i)$, we associate a sequence of subgroups $\{ \delta ^\Pi _i (G) \} $ of $G$ defined by $\delta ^\Pi _{0} (G)=G$ and $$\delta ^\Pi _i (G) =[\delta ^\Pi _{i-1} (G) , \delta ^\Pi _{i-1} (G)] \left(\delta ^\Pi _{i-1} (G)\right)^{p_i}.$$ It is
easy to prove by induction that these subgroups form a $\Pi $-series. Thus a group $G$ is $\Pi $-graded if and only if
\begin{equation}\label{int}
\bigcap_{i=1}^\infty \delta ^\Pi _{i} (G) =\{ 1\} .
\end{equation}

The proof of the following lemma is elementary and can be found in
\cite[Lemma 3.3 a)]{OO}.

\begin{lem}\label{ord}
Let $\Pi = (p_1, p_2, \ldots )$ be an infinite sequence of primes,
$G$ a torsion $\Pi $-graded group. Then for every $n\in \mathbb N$,
every element of $\delta _n^\Pi (G)$ has order $p_{n+1}^{\alpha
_{n+1}}\cdots p_{n+k}^{\alpha _{n+k}}$ for some integers $\alpha
_{n+1}, \ldots , \alpha_{n+k}$ and  $k>0$.
\end{lem}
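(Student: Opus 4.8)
The plan is to induct on $n$, using the specific structure of the $\delta$-series: at each step we pass from $\delta^\Pi_{i-1}(G)$ to $\delta^\Pi_i(G)=[\delta^\Pi_{i-1}(G),\delta^\Pi_{i-1}(G)]\,(\delta^\Pi_{i-1}(G))^{p_i}$, so the quotient $\delta^\Pi_{i-1}(G)/\delta^\Pi_i(G)$ is an elementary abelian $p_i$-group. The key elementary fact I would isolate first is the following: if $A$ is an abelian group of exponent $p$ and $x$ is an element of a group whose image in some abelian quotient of exponent $p$ is nontrivial, then the order of $x$ is divisible by $p$ — equivalently, if $x\in\delta^\Pi_{i-1}(G)\setminus\delta^\Pi_i(G)$ and $x$ has finite order, then $p_i$ divides the order of $x$. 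This is immediate since the image of $x$ in the exponent-$p_i$ group $\delta^\Pi_{i-1}(G)/\delta^\Pi_i(G)$ is a nontrivial element, hence of order exactly $p_i$, which must divide the order of $x$.

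For the base case $n=0$: take any nontrivial $g\in G$ of finite order; since $G$ is $\Pi$-graded, $\bigcap_i\delta^\Pi_i(G)=\{1\}$, so there is a largest $k\ge 1$ with $g\in\delta^\Pi_{k-1}(G)$ but $g\notin\delta^\Pi_{k}(G)$ (if $g\ne 1$ such $k$ exists by the trivial-intersection property). I want to show the order of $g$ is a product of powers of $p_1,\dots,p_k$ only. Write the order as $q_1^{\beta_1}\cdots q_s^{\beta_s}$ with distinct primes $q_j$. For each prime $q$ dividing $|g|$, pick the largest $m\ge 1$ with $g\in\delta^\Pi_{m-1}(G)$; then by the elementary fact applied at level $m$, $q$ can only arise as $p_m$ — more carefully, I would argue: for each $i$ with $1\le i\le k$, if $g\in\delta^\Pi_{i-1}(G)$ then either $g\in\delta^\Pi_i(G)$ or $p_i\mid|g|$, and after dividing out by the $p_i$-part one stays inside a subgroup with the same intersection property; iterating, every prime divisor of $|g|$ must equal some $p_i$ with $1\le i\le k$. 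For general $n$, replace $G$ by $\delta^\Pi_n(G)$ and observe that its own $\delta$-series with respect to the shifted sequence $(p_{n+1},p_{n+2},\dots)$ is exactly $\delta^\Pi_n(G)=\delta^\Pi_{n+0}(G)\rhd\delta^\Pi_{n+1}(G)\rhd\cdots$, and that it is again torsion and graded with respect to the shifted sequence; applying the base case to this group gives that every element of $\delta^\Pi_n(G)$ has order of the form $p_{n+1}^{\alpha_{n+1}}\cdots p_{n+k}^{\alpha_{n+k}}$.

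The step I expect to need the most care is verifying that $\delta^\Pi_{i}\big(\delta^\Pi_n(G)\big)=\delta^\Pi_{n+i}(G)$ when the inner $\delta$-operator uses the shifted sequence of primes, i.e.\ that the $\delta$-series construction is compatible with passing to a later term and re-indexing the primes. This is a direct unwinding of the recursive definition $\delta^\Pi_i(G)=[\delta^\Pi_{i-1}(G),\delta^\Pi_{i-1}(G)](\delta^\Pi_{i-1}(G))^{p_i}$ by a second induction, but it is the one place where a genuine (if routine) check is required; everything else reduces to the single observation about elements surviving in an exponent-$p$ quotient. Since the excerpt attributes the full argument to \cite[Lemma 3.3 a)]{OO}, I would present only this sketch and refer there for the bookkeeping.
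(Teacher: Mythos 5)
The paper does not actually prove Lemma~\ref{ord} in the text; it defers entirely to \cite[Lemma~3.3 a)]{OO}. So there is no internal argument to compare against, and your sketch must be judged on its own.

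The two key ideas you isolate are correct and are the right ones: (i) if $x\in\delta^\Pi_{i-1}(G)\setminus\delta^\Pi_i(G)$ has finite order, then $p_i$ divides $|x|$, because the image of $x$ in the exponent-$p_i$ abelian quotient $\delta^\Pi_{i-1}(G)/\delta^\Pi_i(G)$ is nontrivial and hence of order $p_i$; and (ii) the shift identity $\delta^{\Pi'}_i\bigl(\delta^\Pi_n(G)\bigr)=\delta^\Pi_{n+i}(G)$ with $\Pi'=(p_{n+1},p_{n+2},\dots)$, together with the observation that $\delta^\Pi_n(G)$ is again torsion and $\Pi'$-graded, reduces the general statement to the case $n=0$. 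Both of these check out.

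There is, however, a genuine slip in the base case. You set $k$ to be the unique index with $g\in\delta^\Pi_{k-1}(G)\setminus\delta^\Pi_k(G)$ and then claim that all prime divisors of $|g|$ lie among $p_1,\dots,p_k$. That is false. Take $\Pi=(p_1,p_2,p_3,\dots)$ with $p_1\neq p_2$, and $G=\mathbb Z/p_1\times\mathbb Z/p_2$. Then $\delta^\Pi_1(G)=G^{p_1}=0\times\mathbb Z/p_2$ and $\delta^\Pi_2(G)=\{0\}$, so $G$ is torsion and $\Pi$-graded. For $g=(1,1)$ one has $g\notin\delta^\Pi_1(G)$, so your $k$ equals $1$, yet $|g|=p_1p_2$ involves $p_2$ as well. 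The index $k$ of the lemma cannot be taken to be the first level at which the element leaves the $\delta$-series. What the iteration in your sketch actually proves, once stated carefully, is: for every prime $q$ dividing $|g|$, the element $h=g^{|g|/q}$ has order $q$, is nontrivial, and hence lies in $\delta^\Pi_{m-1}(G)\setminus\delta^\Pi_m(G)$ for some $m$, forcing $q=p_m$. This shows every prime divisor of $|g|$ is some $p_m$ with $m\geq 1$ (resp. $m>n$ in the shifted case), and since there are only finitely many such divisors one then takes $k$ to be the largest of the resulting indices, which can strictly exceed the first fall-out level. The conclusion of the lemma is recovered, but the explicit bound $i\leq k$ you assert both in the set-up sentence and in the final clause of the iteration is not.
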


\begin{cor}\label{csp}
Let $\Pi $ be an infinite sequence of pairwise distinct primes, $G$
a $\Pi $-graded torsion group. Then every finite index subgroup
$H\le G$ contains $\delta ^\Pi _{n} (G)$ for some $n$.
\end{cor}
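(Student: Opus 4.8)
The plan is to reduce to a \emph{normal} subgroup of finite index and then exploit Lemma~\ref{ord} together with the pairwise distinctness of the primes in $\Pi$. Given a finite index subgroup $H\le G$, I would let $N$ be the normal core of $H$ in $G$, i.e.\ the intersection of the (finitely many) conjugates of $H$; then $N\lhd G$, $[G:N]<\infty$, and $N\le H$. Since the subgroups $\delta^\Pi_i(G)$ are characteristic, hence normal, in $G$, it suffices to find $n$ with $\delta^\Pi_n(G)\le N$.

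Write $m=[G:N]$. Because the terms of $\Pi$ are pairwise distinct primes, only finitely many of them divide $m$; choose $n$ so large that $p_j\nmid m$ for every $j>n$. Now take any $g\in\delta^\Pi_n(G)$. By Lemma~\ref{ord}, the order of $g$ equals $p_{n+1}^{\alpha_{n+1}}\cdots p_{n+k}^{\alpha_{n+k}}$ for some $k>0$ and some exponents $\alpha_{n+1},\ldots,\alpha_{n+k}$; in particular every prime divisor of $|g|$ lies among $p_{n+1},p_{n+2},\ldots$. The order of the image $gN$ in the finite group $G/N$ divides both $|g|$ and $m=|G/N|$, and by the choice of $n$ these two numbers are coprime, so $gN$ is trivial, i.e.\ $g\in N$. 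Hence $\delta^\Pi_n(G)\le N\le H$, as required.

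I expect no real obstacle here: the only substantive input is Lemma~\ref{ord}, which confines the torsion of $\delta^\Pi_n(G)$ to the ``tail'' $p_{n+1},p_{n+2},\dots$ of $\Pi$, and the elementary fact that a list of pairwise distinct primes eventually avoids every prime divisor of a fixed integer $m$. The one point to be mildly careful about is that $G$ is not assumed finitely generated in this corollary, so one cannot invoke finiteness of $[G:\delta^\Pi_n(G)]$; but the argument above does not need it, since the normal core of a finite index subgroup has finite index in an arbitrary group.
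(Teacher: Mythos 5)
Your proof is correct and follows essentially the same route as the paper's: pass to a finite-index normal subgroup (you make the normal core explicit where the paper simply says one may assume $H$ normal), pick $n$ so that $p_j$ for $j>n$ avoids the prime divisors of the index, and apply Lemma~\ref{ord} to conclude coprimality forces $\delta^\Pi_n(G)$ into the subgroup. The only addition is your closing remark that finite generation of $G$ is not needed, which is a valid and mildly useful observation but does not change the argument.
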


\begin{proof}
Since every finite index subgroup contains a finite index normal
subgroup, we may assume that $H$ is normal without loss of
generality. Let $\Pi =(p_1, p_2, \ldots )$. As all primes in $\Pi $
are distinct, there exists an integer $n$ such that $|G/H|$ is not
divisible by $p_j$ for any $j> n$. Let $x$ be an arbitrary element
of $\delta _n^\Pi (G)$. By Lemma \ref{ord} we have ${\rm gcd} (|x|,
|G/H|)=1$ and hence the image of $x$ in $G/H$ is trivial. Thus we
have $\delta _n^\Pi (G)\le H$.
\end{proof}

The following analogue of Corollary \ref{csp} for $\Pi =(p,p,
\ldots)$, where $p$ is a prime, is quite trivial.

\begin{lem}\label{misha}
Let $\Pi =(p, p, \ldots )$, where $p$ is a prime, and let $G$ be a
torsion $\Pi $-graded group. Then every finite index subgroup of $G$
contains $\delta ^{\Pi }_n (G)$ for some $n$.
\end{lem}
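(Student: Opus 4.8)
The plan is to reduce the statement for a finite index subgroup $H$ to the case where $H$ is normal, and then to exploit the fact that the quotient $G/H$ is a finite $p$-group because $G$ is a torsion $\Pi$-graded group with $\Pi=(p,p,\ldots)$. First I would note, exactly as in the proof of Corollary \ref{csp}, that every finite index subgroup contains a finite index normal subgroup, so without loss of generality $H\rhd G$ is normal and it suffices to find $n$ with $\delta^\Pi_n(G)\le H$. Next, the key observation is that $G/H$ is a finite group which is a torsion $\Pi$-graded group (being a quotient of one, with the induced $\Pi$-series), so by Lemma \ref{ord} — or more directly by the remark in the introduction that a torsion $(p,p,\ldots)$-graded group is a $p$-group — every element of $G/H$ has order a power of $p$, whence $|G/H|=p^k$ for some $k$.

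The main step is then to show that $\delta^\Pi_k(G)\le H$ when $[G:H]=p^k$. I would prove by induction on $j$ that the image $\overline{\delta^\Pi_j(G)}$ of $\delta^\Pi_j(G)$ in the finite $p$-group $\bar G = G/H$ satisfies $[\bar G : \overline{\delta^\Pi_j(G)}]\ge p^{\min(j,k)}$ as long as $\overline{\delta^\Pi_j(G)}\neq\{1\}$; more to the point, that $\overline{\delta^\Pi_j(G)}$ is contained in the $j$-th term of the corresponding $\delta^\Pi$-series of the finite $p$-group $\bar G$, which is just a variant of the lower $p$-central-type series. Since $\bar G$ is a finite $p$-group of order $p^k$, any descending series of the form $\bar G = \bar G_0 \rhd \bar G_1 \rhd \cdots$ with each $\bar G_{i-1}/\bar G_i$ abelian of exponent dividing $p$ must reach $\{1\}$ within $k$ steps (each proper step multiplies the index by at least $p$, and a nontrivial finite $p$-group always has a nontrivial abelian quotient of exponent $p$, namely via the Frattini quotient, so the series cannot stabilize above $\{1\}$). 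Applying this to the image of the $\delta^\Pi$-series of $G$ in $\bar G$ gives $\overline{\delta^\Pi_k(G)}=\{1\}$, i.e.\ $\delta^\Pi_k(G)\le H$, with $n=k$.

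The mild obstacle is just making precise that the image of $\delta^\Pi_i(G)$ in $\bar G$ equals $\delta^\Pi_i(\bar G)$ (which is immediate from the definition $\delta^\Pi_i(G)=[\delta^\Pi_{i-1}(G),\delta^\Pi_{i-1}(G)](\delta^\Pi_{i-1}(G))^{p}$, since taking images commutes with commutators and $p$-th powers), together with the elementary fact that the $\delta^\Pi$-series of a finite $p$-group of order $p^k$ terminates at $\{1\}$ in at most $k$ steps. Both are routine; indeed the whole lemma is essentially a triviality once one invokes that a torsion $(p,p,\ldots)$-graded group is a $p$-group, which is why the paper calls it ``quite trivial.''
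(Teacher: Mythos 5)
Your proposal is correct and follows essentially the same route as the paper: reduce to $H$ normal, observe that $G/H$ is a finite $p$-group, and conclude that its $\delta^\Pi$-series terminates because each step is proper (the paper cites nilpotency of finite $p$-groups, you cite the Frattini quotient — the same fact). Your version just adds the quantitative bound $n\le k$ and spells out the verbal-subgroup point, neither of which changes the argument.
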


\begin{proof}
Let $H\le G$, where $G$ is $\Pi$-graded and $H$ is of finite index.
Again we may assume that $H$ is normal in $G$. Then $G$ is a
$p$-group according to Lemma \ref{ord} and hence so is $Q=G/H$. Since every finite $p$-group
is nilpotent, we have $\delta ^{\Pi }_1(Q)=[Q,Q]Q^p\ne Q$. Therefore
$\delta ^{\Pi }_n (Q)=1$ for some $n$. This implies $\delta ^{\Pi
}_n (G)\le H$.
\end{proof}

\section{Torsion groups with positive rank gradient}

Theorem \ref{main} is a particular case of the following.

\begin{thm}\label{main1}
For every finitely presented group $G$ of deficiency $\d (G)\ge 2$, every infinite sequence of
primes $\Pi =(p_1,p_2, \ldots )$, and every $\e>0$, there exists an
infinite $\Pi $-graded torsion quotient group $Q$ of $G$ such that
\begin{equation}
\rg (Q)\ge \d(G)-1 -\e .
\end{equation}
\end{thm}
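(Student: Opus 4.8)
The plan is to construct $Q$ as an iterated quotient of $G$ in the spirit of \cite{OS}, killing torsion step by step along the sequence $\Pi$ while keeping deficiency under control via Lemma \ref{quot}. Concretely, I would build a chain of quotients $G=Q^{(0)}\twoheadrightarrow Q^{(1)}\twoheadrightarrow Q^{(2)}\twoheadrightarrow\cdots$, where at stage $i$ we pass from $Q^{(i-1)}$ to $Q^{(i)}$ by adjoining relations of the form $g^{m}$, with $g$ ranging over carefully chosen elements and $m$ an appropriate $p_i$-power order modulo the relevant finite-index subgroup, so that in the limit $Q=\varprojlim$ (i.e.\ the quotient of $G$ by the union of all adjoined relations) is an infinite torsion $\Pi$-graded group. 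The $\Pi$-graded condition is arranged by ensuring that the subgroups $\delta^\Pi_n(Q)$ eventually lie in every finite-index subgroup — one tracks the normal series $\{\delta^\Pi_n(G)\}$ from the Preliminaries and only adds relations $g^m$ with $g\in\delta^\Pi_{n}$ for large $n$ and $m$ supported on the tail primes $p_{n+1},\dots$, using Lemma \ref{ord} so that these relations do not disturb the earlier structure and the intersection $\bigcap_n\delta^\Pi_n(Q)$ becomes trivial.

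The key bookkeeping is the deficiency estimate. Start with a finite presentation of $G$ realizing $\d(G)$, so $\d(G)-1\ge 1$. At each stage where we kill an element $g$ of order $m$ modulo a finite-index normal subgroup $N=N^{(i)}$, Lemma \ref{quot} says the deficiency of the image $M$ of $N$ drops by at most $[Q^{(i-1)}:N]/m$. If we choose $N$ to have large index and $g$ to have large order $m$ relative to the total amount of ``deficiency per unit index'' we have to spend, the cumulative loss, divided by the ambient index, can be made less than $\e$. Precisely, I would arrange that for the fixed finite-index subgroup $N$ used at each stage (one can work inside a single $N=\delta^\Pi_n(G)$ for a well-chosen $n$, then recurse), we have $\d(N)-1 \ge (\d(G)-1)[G:N] - \e[G:N]$ after all the torsion-killing relations inside $N$ are added; this is the analogue for quotients of the subgroup estimate in Lemma \ref{def}. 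The point is that each relation $g^m$ contributes $[G:N]/m$ extra relators to a presentation of (the image of) $N$, and by making the orders $m$ grow fast enough — which is possible because we are free to pass to deeper and deeper subgroups where elements have higher order — the sum $\sum [G:N]/m_j$ stays below $\e[G:N]$.

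Having built an infinite torsion $\Pi$-graded $Q$ with $\d(H)-1\ge(\d(G)-1-\e)[G:H]$ for a cofinal family of finite-index subgroups $H$ (namely the images of the $\delta^\Pi_n$), one concludes: for any finite-index $H\le Q$, Corollary \ref{csp} (in the pairwise-distinct-primes case) or Lemma \ref{misha} (in the $\Pi=(p,p,\dots)$ case), together with Lemma \ref{NS}, reduces the computation of $(d(H)-1)/[Q:H]$ to the subgroups $\delta^\Pi_n$ — and in general one handles an arbitrary $\Pi$ by the same device of pushing $H$ down to a member of the constructed series, since by construction every finite-index subgroup contains some $\delta^\Pi_n(Q)$. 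Then $d(H)-1\ge\d(H)-1\ge(\d(G)-1-\e)[Q:H]$ gives $(d(H)-1)/[Q:H]\ge\d(G)-1-\e$ for all such $H$, whence $\rg(Q)\ge\d(G)-1-\e$. The infiniteness of $Q$ is guaranteed because $\rg(Q)>0$ forces $Q$ to be infinite (a finite group has a trivial finite-index subgroup, giving rank gradient $\le 0$); alternatively one checks $[Q:\delta^\Pi_n(Q)]\to\infty$ directly from the deficiency lower bound.

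The main obstacle I expect is the simultaneous control of three competing demands: (i) adding enough relations $g^m$ to make $Q$ torsion, (ii) adding few enough relations, weighted by index, to keep the deficiency loss below $\e$, and (iii) keeping $Q$ infinite and $\Pi$-graded. These pull against each other — killing torsion wants many relations, the deficiency bound wants few, and infiniteness wants the quotient not to collapse — and the delicate part is the inductive scheme that chooses, at each stage, an element $g$ of sufficiently high order $m$ in a sufficiently deep finite-index subgroup so that $[G:N]/m$ is summably small, while still certifying (e.g.\ via the small-cancellation or HNN-type arguments underlying \cite{OS}, or via a direct limit argument) that the process does not terminate with a finite group. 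Verifying that such $g$ of large order always exist at every stage — i.e.\ that the partially-constructed quotient is still infinite and contains elements of unbounded order in the appropriate subgroups — is where the real work lies; once that is in place, the deficiency accounting via Lemmas \ref{def} and \ref{quot} and the rank-gradient extraction via Lemmas \ref{NS}, \ref{misha} and Corollary \ref{csp} are routine.
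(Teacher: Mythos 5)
Your overall strategy---iterated quotients killing high powers $g^m$ deep in the $\Pi$-series, deficiency bookkeeping via Lemmas \ref{quot} and \ref{def}, and rank-gradient extraction from the cofinal family $\{\delta^\Pi_n(Q)\}$ via Lemma \ref{NS} together with Corollary \ref{csp} or Lemma \ref{misha}---is exactly the paper's, and you have the right lemmas in hand. But the step you yourself flag as ``where the real work lies'' (certifying at each stage that elements of large order still exist and that the quotient has not collapsed, which you suggest may require small-cancellation or HNN machinery from \cite{OS}) is a genuine gap, and the paper closes it with an elementary device you have missed. One enumerates the elements $g_0,g_1,\ldots$ of $G$ once and for all and, at stage $k$, kills $g_{k+1}$ only if its image in $\widehat{G}_k=G_k/\bigcap_j\delta^\Pi_j(G_k)$ still has infinite order; but in that case, by the very definition of $\widehat{G}_k$, the order $m$ of $g_{k+1}$ modulo $\delta^\Pi_j(G_k)$ tends to infinity with $j$, so an $n_{k+1}$ making the deficiency loss $[G_k:\delta^\Pi_{n_{k+1}}(G_k)]/m$ as cheap as needed is available automatically. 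No small-cancellation input is required, and non-collapse is automatic from the inductive invariant $\d (D_i)-1>r[G_i:D_i]$ with $r>0$, since finite groups have nonpositive deficiency.

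There is a second gap in the passage from the finitely presented intermediate groups $G_i$ to the direct-limit $Q$. You invoke $d(H)-1\ge \d (H)-1$ for finite-index $H\le Q$, but $Q$ is not finitely presented, so $\d (H)$ need not be defined. The paper bridges this by tracking the mod-$p_{n_i+1}$ abelianizations $A_i=D_i/[D_i,D_i]D_i^{p_{n_i+1}}$ and imposing, as inductive condition (c), that each kernel $\Ker \alpha_{i+1}$ lies inside $[D_i,D_i]D_i^{p_{n_i+1}}$; this guarantees $A_i$ survives unchanged into $Q$, giving $d(\delta^\Pi_{n_i}(Q))\ge d(A_i)\ge \d (D_i)>r[Q:\delta^\Pi_{n_i}(Q)]+1$. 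You also have a minor slip in the construction: it is $g_{k+1}^m$, not $g_{k+1}$ or $m$, that lies deep in the series $\delta^\Pi_{n_{k+1}}(G_k)$, and $m$ is supported on the initial primes $p_1,\ldots,p_{n_{k+1}}$ rather than the tail. With these two mechanisms supplied, the rest of your outline (including the remark that positive rank gradient forces $Q$ to be infinite) is correct and follows the paper.
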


\begin{proof}
We fix a sequence of primes $\Pi $ and $\e>0$. Let
$r=\d(G)-1-\e$. Without loss of generality we can assume that $\e < 1$ and hence $r>0$. We enumerate all elements of $G=\{ g_0=1, g_2,
\ldots \} $ and  proceed by induction.

Suppose that for some $k\ge 0$, we have already constructed a
sequence of epimorphisms
\begin{equation}\label{seq}
G=G_0\stackrel{\alpha _1}\to G_1\stackrel{\alpha _2}\to \ldots
\stackrel{\alpha _k}\to G_k
\end{equation}
and a sequence of positive integers $0=n_0<n_1<\ldots <n_k$. In what
follows, we denote $\delta ^\Pi _{n_i} (G_i)$ by $D_i$. Given a
group $H$, we define $\widehat H=H/ \bigcap\limits_{j=1}^\infty
\delta_j^\Pi (H)$. Assume that for every $i=0, \ldots , k$, the
following conditions hold.

\begin{enumerate}
\item[(a)] The natural image $g_i$ in $\widehat{G}_i$ has finite
order.

\item[(b)] $\d (D_i)-1>r [G_i:D_i]$.
\end{enumerate}
Further if $k>0$, we assume that
\begin{enumerate}
\item[(c)] $\Ker (\alpha _{i+1}) \le [D_{i},
D_{i}]D_{i}^{p_{n_i+1}}$ for every $i=0, \ldots , k-1$.
\end{enumerate}

The following observation will be used several times below. Suppose that the image of
an element $h$ of a group $H$ has finite order in $\widehat H$. Then for every
quotient group $K$ of $H$, the image of $h$ in $\widehat K$ has finite order. This follows immediately from the fact that $\delta ^\Pi _i(H)$ is a verbal subgroup of $H$ and hence $\delta ^\Pi _i(H)=\delta ^\Pi _i(K)$ for all $i$.

Let us consider two cases.

{\it Case 1.} Suppose that the natural image of $g_{k+1}$ in
$\widehat G_k$ has finite order. Then we set $G_{k+1}=G_k$,
$\alpha _{k+1}=id$, and $n_{k+1}=n_k+1$. Condition (b) for $i=k+1$ easily
follows from the inductive assumption and Lemma \ref{def} applied to the groups $D_{k+1}\le D_k$.
Verifying (a) and (c) is straightforward.

{\it Case 2.} Suppose now that the natural image of  $g_{k+1}$ in
$\widehat G_k$ has infinite order. This means that the order of
the image of $g_{k+1}$ in $G_k/\delta _j^\Pi (G_k)$ tends to
$\infty $ as $j\to \infty $. Therefore by (b) we can find
$n_{k+1}>n_k$ such that such that
\begin{equation}\label{ddk}
\frac{\d (D_{k}) -1}{[G_k:D_{k}]}>r +\frac{1}{m},
\end{equation}
where $m$ is the order of the image of $g_{k+1}$ in
$G_k/\delta^\Pi _{n_{k+1}}(G_k)$.  We set $G_{k+1}=G_k/\ll \bar
g_{k+1}^m\rr^{G_k}$, where $\bar g_{k+1} $ is the image of
$g_{k+1}$ in $G_k$, and let $\alpha _{k+1}$ be the natural
homomorphism $G_k\to G_{k+1}$.

Again verifying (c) for $i=k$ is trivial. Indeed we have  $$\Ker (\alpha _{k+1}) = \ll \bar
g_{k+1}^m\rr^{G_k} \le \delta_{n_{k+1}}^\Pi (G_{k})\le \delta_{n_{k}+1}^\Pi (G_{k})=[D_{k},
D_{k}]D_{k}^{p_{k+1}} $$ since $\bar
g_{k+1}^m \in \delta^\Pi _{n_{k+1}}(G_k)$ by our construction.

It is also straightforward to verify (a) for $i=k+1$.
Finally to prove (b) we note that $D_{k+1}=\delta_{n_{k+1}}^\Pi
(G_{k+1})=\alpha _{k+1} (\delta_{n_{k+1}}^\Pi (G_{k}))$. Using subsequently
Lemma \ref{quot}, Lemma \ref{def}, (\ref{ddk}), and the inclusion $\Ker (\alpha _{k+1})\le \delta_{n_{k+1}}^\Pi (G_{k})$, we
obtain
$$
\begin{array}{rl}
\d (D_{k+1}) -1 \ge & \d (\delta_{n_{k+1}}^\Pi (G_{k}))-1 -
\frac{1}{m}[G_{k}:\delta_{n_{k+1}}^\Pi (G_{k})]  \ge \\ &
\\  & (\d (D_k)-1)[D_k:\delta_{n_{k+1}}^\Pi (G_{k})] - \frac{1}{m}[G_{k}:\delta_{n_{k+1}}^\Pi (G_{k})]> \\ & \\
& (r+\frac1m)[G_k:D_k][D_k:\delta_{n_{k+1}}^\Pi (G_{k})]
-\frac{1}{m}[G_{k}:\delta_{n_{k+1}}^\Pi (G_{k})] = \\ & \\ &
r[G_{k}:\delta_{n_{k+1}}^\Pi (G_{k})] = \\ & \\ & r[\alpha _{k+1}(G_{k}):\alpha _{k+1}(\delta_{n_{k+1}}^\Pi (G_{k}))]= \\ & \\ & r[G_{k+1}:D_{k+1}].
\end{array}
$$
This finishes the inductive step.

Let now $P$ be the limit group. That is, let
$P=G_0/\bigcup_{i=1}^\infty K_i$, where $K_i={\rm Ker}\, (\alpha
_i\circ \cdots \circ \alpha _1) $. Further let $Q=\widehat P$.
Clearly $\bigcap\limits_{i=1}^\infty \delta _i^{\Pi } (Q)=\{ 1\} $
and hence $Q$ is a $\Pi $--graded group. It follows from (a) that every element of $Q$ has finite
order.

Let $$A_{i}=\delta _{n_i}^{\Pi } (Q)/[\delta _{n_i}^{\Pi } (Q), \delta _{n_i}^{\Pi } (Q)](\delta _{n_i}^{\Pi } (Q))^{p_{n_i+1}}.$$
Observe that for every $i\in \mathbb N$, $A_{i}$ is isomorphic to $D_i/[D_i,D_i]D_i^{p_{n_i+1}}$ as $\Ker (G_i\to Q)\le [D_i,D_i]D_i^{p_{n_i+1}}$ by
(c). Hence $A_{i}$ is isomorphic to a quotient group of $(\mathbb
Z/\mathbb Z_{p_{n_i+1}})^{d_i}$ modulo a subgroup of rank $r_i$, where
$d_i-r_i =\d (D_i)$. In particular, $d (A_{i})\ge \d (D_i)$ and using (b) we obtain

\begin{equation}\label{rgQ}
d(\delta _{n_i}^{\Pi } (Q))-1\ge d(A_{i})-1\ge \d (D_i)-1> r [G_i:D_i]=r[Q:\delta _{n_i}^{\Pi } (Q)]
\end{equation}
for every $i\in \mathbb N$. Again we consider two cases.

{\it Case 1.} Suppose that $\Pi $ contains infinitely many distinct
primes. Note that if a group is $\Xi $-graded for some subsequence $\Xi $ of $\Pi $, then it is $\Pi $-graded as well. Thus passing to a subsequence of $\Pi $ if necessary we can assume that $\Pi $ consists of pairwise distinct primes. Let $Q$ be an
infinite $\Pi $-graded torsion quotient group of $G$ satisfying (\ref{rgQ}). For every finite index
subgroup $H\le Q$, we have $(d(H)-1)/[Q:H]\ge r$ by Corollary
\ref{csp}, Lemma \ref{NS}, and (\ref{rgQ}). Thus $\rg (Q)\ge r=\d
(G)-1-\e $.

{\it Case 2.} If $\Pi $ contains finitely many distinct primes, then at least one prime $p$ occurs in $\Pi $ infinitely many times. Thus passing to a subsequence if necessary we can assume that $\Pi =(p,p, \ldots )$. The rest of
the proof in this case is the same as above. The only difference is
that we have to use Lemma \ref{misha} instead of Corollary
\ref{csp}.
\end{proof}

\begin{proof}[Proof of Corollary \ref{pgroup}]
Let $\Pi = (p,p, \ldots )$. By Theorem \ref{main1} there exists a
finitely generated residually finite infinite $\Pi $-graded torsion
group $Q$ with positive rank gradient. By Lemma \ref{ord} $Q$ is a
$p$-group.
\end{proof}

Corollary \ref{Ersh} is a particular case of the following.

\begin{cor}
For every infinite sequence of primes $\Pi $,  there exists a
finitely generated torsion non-amenable $\Pi $-graded group $G$.
In particular, for every  prime $p$, there exists a finitely
generated residually finite non-amenable $p$-group.
\end{cor}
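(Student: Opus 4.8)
The plan is to read this corollary off from Theorem~\ref{main1} together with the theorem of Lackenby--Abert--Nikolov quoted in the introduction. First I would apply Theorem~\ref{main} to the given sequence $\Pi$, obtaining a finitely generated infinite torsion $\Pi$-graded group $G$ with $\rg(G)>0$. Being a finitely generated $\Pi$-graded group, $G$ is residually finite: by definition it admits a $\Pi$-series with trivial intersection, and since $G$ is finitely generated all the terms of this series (in particular the subgroups $\delta_i^\Pi(G)$) have finite index in $G$.

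Next I would invoke the theorem of Abert and Nikolov that every finitely generated amenable group has zero rank gradient; this is the extension to arbitrary finitely generated groups of Lackenby's result for finitely presented groups. Since $\rg(G)>0$, the group $G$ cannot be amenable. This already proves the first assertion — a finitely generated torsion non-amenable $\Pi$-graded group, automatically residually finite — and in particular recovers Corollary~\ref{Ersh}.

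For the remaining claim I would simply specialize to $\Pi=(p,p,\ldots)$. As in the proof of Corollary~\ref{pgroup}, Lemma~\ref{ord} then forces the torsion $\Pi$-graded group $G$ produced above to be a $p$-group; it is residually finite as noted, and non-amenable by the previous paragraph. I do not expect a genuine obstacle here: all the difficulty is absorbed into Theorem~\ref{main1}, and the only external ingredient is the vanishing of rank gradient for amenable groups, so the one point requiring care is that this vanishing is applied in the needed generality — finitely generated rather than finitely presented — which is exactly what the Abert--Nikolov strengthening provides.
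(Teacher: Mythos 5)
Your argument is exactly the paper's: apply Theorem~\ref{main1} (or its specialization Theorem~\ref{main}) to get a finitely generated torsion $\Pi$-graded group with positive rank gradient, then conclude non-amenability from the Lackenby--Abert--Nikolov theorem that amenable groups have zero rank gradient, and specialize to $\Pi=(p,p,\ldots)$ with Lemma~\ref{ord} for the $p$-group case. This matches the paper's proof, only spelled out in slightly more detail.
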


\begin{proof}
By the Lackenby-Abert-Nikolov Theorem \cite{AN07a}, the groups constructed in Theorem \ref{main1} are non-amenable if $\e <1$.
\end{proof}

\end{document}